\theoremstyle{plain}
\newtheorem{theorem}{Theorem}[section] 
\newtheorem{lemma}[theorem]{Lemma}
\theoremstyle{definition}
\newtheorem{definition}[theorem]{Definition}
\newtheorem{assumption}[theorem]{Assumption}
\theoremstyle{remark}
\title{Stochastic Maximum Principles and Linear-Quadratic Optimal Control Problems for Fractional Backward Stochastic Evolution Equations in Hilbert Spaces}
\author{Javad A. Asadzade\thanks{Department of Mathematics, Eastern Mediterranean University, North Cyprus, Turkey. Email: javad.asadzade@emu.edu.tr} 
	\, and \, Nazim I. Mahmudov\thanks{Department of Mathematics, Eastern Mediterranean University, North Cyprus, Turkey; Research Center of Econophysics, Azerbaijan State University of Economics (UNEC), Baku, Azerbaijan. Email: nazim.mahmudov@emu.edu.tr}}
\date{} 
\begin{document}
	
	\maketitle
	
	\begin{abstract}
		This paper develops a comprehensive framework for optimal control of systems governed by fractional backward stochastic evolution equations (FBSEEs) in Hilbert spaces. We first establish a stochastic maximum principle (SMP) as a necessary condition for optimality. This is achieved by introducing spike variations, deriving precise estimates for the associated variational equations, and constructing an adjoint process tailored to the fractional dynamics. Subsequently, we apply this general principle to solve the linear-quadratic (LQ) optimal control problem explicitly. The resulting optimal control is characterized in closed form via the adjoint process and is shown to be governed by a system of coupled fractional forward-backward stochastic equations. Our work bridges fractional calculus with stochastic control theory, providing a rigorous foundation for controlling infinite-dimensional systems with memory and long-range dependencies.
	\end{abstract}
	
	\noindent\textbf{Keywords:} Stochastic maximum principle, LQ optimal control problems, fractional backward stochastic evolution equations.
	
	\noindent\textbf{Mathematics Subject Classification (MSC 2020):} 93E20, 49K20, 49J15.
	\section{Introduction}\label{sec1}
	
	The theory of stochastic optimal control has long been a cornerstone of modern applied mathematics, with profound applications in finance, engineering, physics, and biological sciences. Its historical roots trace back to the pioneering contributions of Kushner \cite{Kushner, Kushner1964} and Bismut \cite{Bismut}, who established the foundations of the SMP. The SMP provides necessary and, under suitable conditions, sufficient conditions for optimality in systems governed by stochastic differential equations (SDEs). A major milestone was the introduction of backward stochastic differential equations (BSDEs) by Pardoux and Peng \cite{Pardoux}, which were subsequently developed by Peng \cite{Peng, Peng2, Peng1990} and became a fundamental analytical tool in stochastic control theory.
	
	Building on these foundations, the 1980s and 1990s saw substantial progress in extending the stochastic maximum principle to more general settings. In particular, Bensoussan \cite{Bensoussan} formulated the SMP for distributed parameter systems, opening the way for the analysis of stochastic partial differential equations (SPDEs). Later, Mahmudov \cite{Mahmudov2002} derived a SMP for stochastic evolution systems in Hilbert spaces, generalizing the theory to infinite-dimensional settings. These developments, together with the earlier contributions of Bismut \cite{Bismut}, laid the groundwork for the study of optimal control in infinite-dimensional stochastic environments.
	
	Further progress was achieved by Hu and Peng \cite{HuPeng1991}, who extended BSDEs to semilinear stochastic evolution equations, and by Mahmudov and McKibben \cite{MahmudovMcKibben2007}, who established a unified framework for BSEEs in Hilbert spaces. Their work derived first-order stochastic evolution systems and introduced optimal control formulations for such equations, significantly enriching the mathematical structure of stochastic control in function spaces. Lü and Zhang \cite{Zhang2014} later advanced the theory by formulating a general Pontryagin-type stochastic maximum principle for BSEEs in infinite dimensions.
	
	Parallel research addressed the relaxation of smoothness assumptions and the inclusion of more general stochastic dynamics. Mao \cite{Mao1995}, Rong \cite{Rong1997}, and Situ \cite{Situ2002} studied BSDEs with jumps and non-Lipschitz coefficients, while Chen and Wang \cite{ChenWang2000} explored infinite-horizon BSDEs. Briand and Hu \cite{BriandHu2006, BriandHu2008} treated BSDEs with quadratic growth and unbounded terminal conditions, broadening existence and uniqueness results beyond standard Lipschitz frameworks. Collectively, these contributions significantly expanded the applicability of BSDE theory.
	
	In the field of stochastic control, El Karoui, Peng, and Quenez \cite{El} integrated BSDEs into financial mathematics, while Morlais \cite{Morlais} extended them to utility optimization via quadratic BSDEs. Cadenillas and Haussmann \cite{Cadenillas}, and later Cadenillas and Karatzas \cite{Cadenillas1995}, addressed stochastic maximum principles in singular control problems, whereas Li \cite{Li2012}, Buckdahn et al. \cite{Buckdahn, Buckdahn2016}, and Yong \cite{Yong2013} developed SMPs for mean-field and interactive systems. Together, these works form the theoretical backbone of modern stochastic control.
	
	In recent years, increasing attention has been devoted to fractional stochastic systems, which capture memory and hereditary effects absent from classical models. Fractional differential equations based on Caputo or Riemann–Liouville operators provide an appropriate mathematical framework for such systems \cite{ZhouShenZhang2013, Zhou2014}. Within the stochastic setting, Li and Wang \cite{LiWang2019} and Yang and Gu \cite{YangGu2021} established existence, uniqueness, and asymptotic properties for fractional stochastic evolution equations. Mahmudov and Ahmadova \cite{Nazim}, along with Ahmadova and Mahmudov \cite{Arzu}, extended the analysis to fractional backward stochastic differential equations and their singular variants, revealing new challenges associated with nonlocal and weakly singular dynamics.
	
	The development of FBSEEs represents a synthesis of advances in fractional calculus and backward stochastic analysis. Li and Luo \cite{Li} provided new well-posedness results for FBSEEs, while Asadzade and Mahmudov \cite{Cavad1} studied singular mean-field backward and forward stochastic Volterra integral equations in infinite-dimensional spaces, establishing existence, uniqueness, and adapted M-solutions. Their results also yield fractional variants of the Pontryagin maximum principle, paving the way for stochastic optimal control in fractional and memory-dependent settings.
	
	Within stochastic control, the LQ optimal control problem continues to be one of the most analytically tractable and practically relevant frameworks. The classical stochastic LQ problem has been extensively investigated by Hu, Jin, and Zhou \cite{Hu}, Yong \cite{Yong2013}, and Meng and Shi \cite{MengShi2013}. Extensions to mean-field and fractional systems \cite{Li2012, Buckdahn, Buckdahn2016} have further enriched the field. Nonetheless, the LQ theory for FBSEEs remains underdeveloped, primarily due to the intricate coupling between fractional derivatives, infinite-dimensional operators, and backward stochastic dynamics.
	
	This paper is principally devoted to the establishment of a SMP and the subsequent analysis of LQ optimal control problems for a class of FBSEEs. In undertaking this endeavor, we systematically extend the seminal results of Bismut \cite{Bismut} and Peng \cite{Peng, Peng1990} into a unified framework that simultaneously incorporates both fractional-order dynamics and infinite-dimensional state spaces, thereby yielding novel theoretical insights and a set of necessary optimality conditions. Our methodological approach is conceptually rooted in the first-order stochastic evolution framework pioneered by Mahmudov and McKibben \cite{MahmudovMcKibben2007}, whose analytical foundation we generalize to the fractional setting.
	
	Building upon the recent well-posedness results for FBSEEs established by Li and Luo \cite{Li}, we formulate and rigorously analyze their fractional counterparts to derive a new SMP and explicit LQ control representations. By synthesizing techniques from the theory of fractional BSDEs with the analytical machinery developed for singular Volterra-type equations \cite{Arzu, Cavad1, Nazim}, we construct a cohesive theoretical framework. This synthesis effectively bridges the disparate domains of fractional stochastic analysis and modern control theory, consequently extending the purview of classical stochastic control to encompass systems characterized by non-Markovian memory effects and infinite-dimensional dynamics.
	
	The remainder of this paper is organized as follows. Section 2 presents the mathematical preliminaries, including fractional calculus operators, Wright functions, and the construction of fractional resolvent operators for stochastic evolution equations. Section 3 establishes a SMP for general fractional backward stochastic systems: we introduce spike variations of the optimal control, derive precise moment estimates for the variational equations, prove a first-order variation formula for the cost functional, and obtain necessary optimality conditions via an adjoint process. Section 4 specializes to the linear-quadratic case, where we solve the associated backward LQ problem explicitly by deriving the adjoint equation through fractional integration by parts and obtaining a closed-form representation of the optimal control. Finally, Section 5 provides concluding remarks.
	
	\section{Preliminaries}
	This section establishes the fundamental mathematical framework and analytical notation required for our subsequent developments. We begin by considering a complete probability space $(\Omega, \mathcal{F}, \mathbb{P})$ endowed with a normal filtration $\{\mathcal{F}_t\}_{0 \le t \le b}$, and introduce separable Hilbert spaces $H$, $U$, and $E$. The stochastic analysis is built around a $Q$-Wiener process $B = \{B_t, t \in [0,b]\}$ defined on this probability space, characterized by a linear, bounded, nonnegative covariance operator $Q \in \mathcal{L}(E)$ with finite trace ($\operatorname{Tr} Q < \infty$).
	
	The spectral structure of the noise process is specified through a complete orthonormal system $\{e_k\}_{k \ge 1}$ in $E$ and a bounded sequence of nonnegative eigenvalues $\{\lambda_k\}_{k \ge 1}$ satisfying $Q e_k = \lambda_k e_k$. This facilitates the representation of the Wiener process via independent standard Brownian motions $\{\beta_k\}_{k \ge 1}$:
	\[
	\langle B_t, e \rangle_E = \sum_{k=1}^{\infty} \sqrt{\lambda_k}\, \langle e_k, e \rangle_E\, \beta_k(t), \quad e \in E, \; t \in [0,b],
	\]
	where $\langle \cdot , \cdot \rangle_E$ denotes the inner product in $E$. The filtration $\{\mathcal{F}_t\}$ is assumed to be generated by this Wiener process.
	
	Key function spaces are defined as follows: $L_2^0 = L_2(Q^{1/2}E,H)$ represents the space of Hilbert-Schmidt operators from $Q^{1/2}E$ to $H$, endowed with the inner product $\langle \Psi, \Phi \rangle_{L_2^0} = \operatorname{Tr}(\Psi Q \Phi^*)$. We further define $L^2(\Omega, \mathcal{F}_b; H)$ as the Hilbert space of $\mathcal{F}_b$-measurable, square-integrable $H$-valued random variables, and $L^2_{\mathcal{F}}([0,b]; H)$ as the space of square-integrable, $\{\mathcal{F}_t\}$-adapted $H$-valued processes with norm
	\[
	\|x\|_{L^2_{\mathcal{F}}([0,b];H)} = \left(\mathbb{E} \int_0^b \|x_t\|_H^2\, dt\right)^{\!1/2}<\infty.
	\]
	Finally, $\mathcal{L}(K,H)$ denotes the space of bounded linear operators from $K$ to $H$ with the standard operator norm, with the special case $\mathcal{L}(H) := \mathcal{L}(H,H)$.
	\begin{definition}\cite{Zhou2014}
		The \emph{Gamma function} is defined for $x>0$ by
		\[
		\Gamma(x) = \int_0^\infty t^{x-1} e^{-t} \, dt.
		\]
	\end{definition}
	\begin{definition}\cite{Zhou2014}
		The left Caputo fractional derivative of order $\alpha \in (0,1)$ with lower limit $0$ for a function $f$ is defined by
		\[
		{^{C}_0}D^{\alpha}(t) \kappa_t = \frac{1}{\Gamma(1-\alpha)} \int_0^t (t-s)^{-\alpha} \kappa^{\prime}_s \, ds, \quad t>0.
		\]
	\end{definition}
	\begin{definition}\cite{Zhou2014}
		The right Caputo fractional derivative of order $\alpha \in (0,1)$ with upper limit $b$ for a function $f$ is defined by
		\[
		{^{C}_t}D^{\alpha}_b \kappa_t = -\frac{1}{\Gamma(1-\alpha)} \int_t^b (s-t)^{-\alpha} \kappa^{\prime}_s \, ds, \quad t>0.
		\]
	\end{definition}
	\begin{definition}\cite{Zhou2014}
		The \emph{Wright function} $W_{\rho}$ is defined as
		\[
		W_{\rho}(r)=\sum_{k=1}^{\infty} \frac{(-r)^{k-1}}{(k-1)!\Gamma(1-\rho k)},\quad r\in \mathbb{C},\, \rho\in (0,1),
		\]
		and it satisfies
		\[
		\int_0^{\infty}r^{\gamma}W_{\rho}(r)\, dr=\frac{\Gamma(1+\gamma)}{\Gamma(1+\rho \gamma)},\quad \gamma>-1.
		\]
	\end{definition}
	
	\begin{definition}\cite{Zhou2014}\label{def:resolvent}
		Let $A$ be the infinitesimal generator of a $C_0$-semigroup $\{S_t\}_{t \ge 0}$ on a Hilbert space $H$.  
		For $\alpha \in (0,1)$, the operators $S_{\alpha}(t)$ and $P_{\alpha}(t)$, called the fractional resolvent operators, are defined by
		\[
		S_{\alpha}(t) = \int_0^{\infty} \alpha\, r\, W_{-\alpha}(r)\, S(t^{\alpha} r)\, dr, \quad
		P_{\alpha}(t) = \int_0^{\infty} W_{-\alpha}(r)\, S(t^{\alpha} r)\, dr, \quad t > 0,
		\]
		where $W_{-\alpha}(r)$ is the Wright function given by
		\[
		W_{-\alpha}(r) = \sum_{k=1}^{\infty} \frac{(-r)^{k-1}}{(k-1)! \, \Gamma(1+\alpha k)}, \quad r \in \mathbb{C}, \, \alpha \in (0,1).
		\]
	\end{definition}
	
	\begin{lemma}[\cite{Li}]\label{lem2}
		For any fixed $t \in [0,b]$, $S_{\alpha}(t)$ and $P_{\alpha}(t)$ are linear operators satisfying
		\[
		\| S_{\alpha}(t) \|_{\mathcal{L}(H)} \leq M_1, \qquad 
		\frac{\alpha M_0}{\Gamma(1+\alpha)} \leq \| P_{\alpha}(t) \|_{\mathcal{L}(H)} \leq \frac{\alpha M_1}{\Gamma(1+\alpha)}.
		\]
	\end{lemma}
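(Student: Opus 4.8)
The plan is to prove the lemma directly from the integral representations of $S_\alpha(t)$ and $P_\alpha(t)$ in Definition~\ref{def:resolvent}, and the work splits into three almost-independent pieces. The ingredients I would use are: (i) the uniform boundedness of the underlying $C_0$-semigroup, $\|S(\tau)\|_{\mathcal{L}(H)}\le M_1$ for every $\tau\ge 0$, together with the matching lower estimate $M_0\le\|S(\tau)\|_{\mathcal{L}(H)}$ that is part of the standing hypotheses on the generator $A$; (ii) the nonnegativity $W_{-\alpha}(r)\ge 0$ for $r>0$ and the exponential-type decay of $W_{-\alpha}$ as $r\to\infty$, which together guarantee that $r\mapsto W_{-\alpha}(r)$ and $r\mapsto r\,W_{-\alpha}(r)$ are integrable on $(0,\infty)$ --- this is exactly what makes the strong (Bochner) integrals defining $S_\alpha(t)$ and $P_\alpha(t)$ meaningful on all of $H$; and (iii) the moment identities for $W_{-\alpha}$ recorded among the preliminaries, which in the normalization used here give $\int_0^\infty \alpha r\,W_{-\alpha}(r)\,dr=1$ and $\int_0^\infty W_{-\alpha}(r)\,dr=\alpha/\Gamma(1+\alpha)$. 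Linearity of $S_\alpha(t)$ and $P_\alpha(t)$ is then immediate, since each $S(t^\alpha r)$ belongs to $\mathcal{L}(H)$ and integrating a family of bounded operators against a scalar kernel produces a bounded linear operator.

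For the two upper bounds I would fix $x\in H$ and estimate the norm underneath the integral sign, which is legitimate because $W_{-\alpha}\ge 0$ and the integrands are Bochner integrable:
\[
\|S_\alpha(t)x\|_H\le\int_0^\infty \alpha r\,W_{-\alpha}(r)\,\|S(t^\alpha r)\|_{\mathcal{L}(H)}\,\|x\|_H\,dr\le M_1\Bigl(\int_0^\infty \alpha r\,W_{-\alpha}(r)\,dr\Bigr)\|x\|_H=M_1\|x\|_H .
\]
Taking the supremum over $\|x\|_H\le 1$ gives $\|S_\alpha(t)\|_{\mathcal{L}(H)}\le M_1$. Replacing the kernel $\alpha r\,W_{-\alpha}(r)$ by $W_{-\alpha}(r)$ and repeating the computation yields $\|P_\alpha(t)\|_{\mathcal{L}(H)}\le M_1\int_0^\infty W_{-\alpha}(r)\,dr=\alpha M_1/\Gamma(1+\alpha)$. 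Both steps are routine once the moment identities are available.

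The remaining inequality $\alpha M_0/\Gamma(1+\alpha)\le\|P_\alpha(t)\|_{\mathcal{L}(H)}$ is the point I expect to be the real obstacle: it is not a triangle-inequality estimate, because pushing the norm inside the integral now goes in the unhelpful direction, and one must rule out cancellation among the vectors $\{S(t^\alpha r)x\}_{r>0}$. The approach I would take is to produce, for each fixed $t$, a unit vector $x_t\in H$ for which $\|P_\alpha(t)x_t\|_H\ge\alpha M_0/\Gamma(1+\alpha)$. Concretely, one picks a norming functional $x_t^*\in H^*$ with $\|x_t^*\|=1$ aligned with the common ``direction'' of the family $S(t^\alpha r)x_t$, and estimates
\[
\|P_\alpha(t)x_t\|_H\ \ge\ x_t^*\!\bigl(P_\alpha(t)x_t\bigr)=\int_0^\infty W_{-\alpha}(r)\,x_t^*\!\bigl(S(t^\alpha r)x_t\bigr)\,dr\ \ge\ M_0\int_0^\infty W_{-\alpha}(r)\,dr=\frac{\alpha M_0}{\Gamma(1+\alpha)},
\]
where the last inequality is forced by $W_{-\alpha}\ge 0$ and the standing lower bound on $S(\cdot)$. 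Carrying this out rigorously --- i.e.\ identifying the structural property of the semigroup that rules out the cancellation and makes the inner estimate $x_t^*(S(t^\alpha r)x_t)\ge M_0$ available uniformly in $r$ --- is the delicate part; once it is done, assembling the three pieces completes the proof.
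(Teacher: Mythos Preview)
The paper does not give its own proof of this lemma: it is quoted verbatim from \cite{Li}, and in the body of the paper only the upper bound $\|P_\alpha(t)\|\le \alpha M_1/\Gamma(1+\alpha)$ is ever invoked. So there is no ``paper's proof'' to compare against; I can only assess your argument on its own terms.

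Your treatment of linearity and of the two upper bounds is the standard one and is correct, modulo one bookkeeping point: the moment formula recorded in the preliminaries is stated for $W_\rho$ with $\rho\in(0,1)$, not for $W_{-\alpha}$, so the identities $\int_0^\infty \alpha r\,W_{-\alpha}(r)\,dr=1$ and $\int_0^\infty W_{-\alpha}(r)\,dr=\alpha/\Gamma(1+\alpha)$ that you use are not literally supplied by the paper and should be cited or derived separately.

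The lower bound, however, is a genuine gap --- and it is more serious than you allow. The hypothesis $\|S(\tau)\|_{\mathcal{L}(H)}\ge M_0$ says only that for each $\tau$ there exists \emph{some} unit vector $x_\tau$ with $\|S(\tau)x_\tau\|\ge M_0$; it does not give a single $x$ that works for all $\tau$, let alone a single functional $x^*$ with $x^*(S(\tau)x)\ge M_0$ uniformly in $\tau$, which is what your displayed chain needs. In fact the inequality can fail outright under the stated hypotheses: take $H=\mathbb{R}^2$ and let $S(\tau)$ be rotation by angle $\tau$. Then $\|S(\tau)\|=1$ for every $\tau$ (so $M_0=M_1=1$), yet $P_\alpha(t)=\int_0^\infty W_{-\alpha}(r)\,S(t^\alpha r)\,dr$ is an oscillatory average and, by a Riemann--Lebesgue type argument, $\|P_\alpha(t)\|\to 0$ as $t\to\infty$, violating $\|P_\alpha(t)\|\ge \alpha/\Gamma(1+\alpha)$. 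So no choice of $x_t,x_t^*$ will close your argument without an additional structural assumption on the semigroup (positivity, self-adjointness with nonnegative spectrum, etc.). Either locate and import that missing hypothesis from \cite{Li}, or --- since the lower bound plays no role in this paper --- simply drop it from the statement.
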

	
	\section{Stochastic maximum principle}
	In this section we consider the following stochastic controlled system:
	\begin{align}\label{e1}
		x_t=S_{\alpha}(b-t)x_b&+\int_t^b (s-t)^{\alpha-1}P_{\alpha}(s-t)\kappa(s,x_s,z_s,u_s)\, ds\nonumber\\
		&+\int_t^b (s-t)^{\alpha-1}P_{\alpha}(s-t) z_s\, dB_s,
	\end{align}
	with the cost functional
	\begin{align}\label{e2}
		\mathcal{J}(u)=\mathbb{E}h(x_0)+\frac{1}{\Gamma(\alpha)}\, \mathbb{E}\int_0^b (b-s)^{\alpha-1}l(s,x_s,z_s,u_s)\, ds.
	\end{align}
	Here, 
	\[
	\kappa :[0,b]\times H \times L^2_0\times U \to H,\quad 
	l:[0,b]\times H \times L^2_0\times U \to \mathbb{R},\quad
	h: H\to H
	\]
	are measurable functions, $\xi \in L^2(\Omega,\mathcal{F}_b,H)$, $u:[0,b]\times \Omega\to U$, and $\alpha\in (\frac{1}{2},1)$.
	
	The following assumptions are imposed.
	\begin{assumption}\label{as1}
		\leavevmode
		\begin{enumerate}
			\item[(i)] $\kappa,l,h$ are continuously differentiable in $(x,z,u)$.\\
			\item[(ii)] The mappings $\kappa_x$
			and $\kappa_z$ are uniformly bounded: 
			\[\Vert \kappa_x\Vert+\Vert \kappa_z\Vert \leq C,\]
			and 
			\[
			\Vert l_x\Vert+\Vert h_x\Vert \leq C(1+\Vert x\Vert ),\quad  
			\Vert l_z\Vert \leq C(1+\Vert z\Vert ),
			\]
			where $C>0$.
		\end{enumerate}
	\end{assumption}
	
	Define the admissible control set
	\[
	\mathcal{U}_{ad}=\{u\in L^2_{\mathcal{F}}([0,b],U): \, u(t,\omega)\in U\}.
	\]
	It is clear that under assumptions \ref{as1}, for any $u \in \mathcal{U}_{ad}$ the state equation \eqref{e1} admits a unique solution 
	\[
	(x,z) = \bigl(x(\cdot,u_{(\cdot)}),\, z(\cdot,u_{(\cdot)})\bigr),
	\]
	and the cost functional \eqref{e2} is well defined. We call $(x,z,u)$ an admissible triple, and $(x,z)$ an \emph{admissible state process}. 
	
	\paragraph{Problem A.} Find a control $u^0_{(\cdot)}\in \mathcal{U}_{ad}$ such that
	\begin{equation}\label{29}
		\mathcal{J}(u^0) = \inf_{u\in \mathcal{U}_{ad}} \mathcal{J}(u).
	\end{equation}
	Any control $u^0$ satisfying \eqref{29} is called an optimal control. 
	The corresponding processes 
	\[
	(x^0,z^0) = \bigl(x(\cdot,u^0_{(\cdot)}),\, z(\cdot,u^0_{(\cdot)})\bigr),
	\]
	and the triple $(x^0,z^0,u^0)$ are called an \emph{optimal state process} and an \emph{optimal triple}, respectively.
	
	Assume that $(x^0_{(\cdot)},z^0_{(\cdot)},u^0_{(\cdot)})$ is an optimal solution of the control problem \eqref{e1}–\eqref{e2}. 
	Consider the following forward stochastic equation:
	\begin{align}\label{30}
		\Psi_t=S_{\alpha}(t)h_{x}(x^0_0)&+\int_0^t (t-s)^{\alpha-1}P_{\alpha}(t-s)\left( \kappa^*_x [s]\,\psi_s+l_x[s]\right)\, ds\nonumber\\
		&+\int_0^t (t-s)^{\alpha-1}P_{\alpha}(t-s) \left( \kappa^*_z [s]\,\psi_s+l_z[s]\right)\, dB_s.
	\end{align}
	
	\noindent
	We will use the following notation:
	\begin{align}
		\begin{cases}
			\mathscr{K} [t] = \mathscr{K}\bigl(t,x^0_t,z^0_t,u^0_t\bigr),\\[1mm]
			\Delta_u \mathscr{K}_t = \mathscr{K}\bigl(t,x^0_t,z^0_t,u_t\bigr)-\mathscr{K}[t],\\[1mm]
			\Delta_x \mathscr{K}_t = \mathscr{K}\bigl(t,x_t,z^0_t,u^0_t\bigr)-\mathscr{K}[t],\\[1mm]
			\Delta_z \mathscr{K}_t = \mathscr{K}\bigl(t,x^0_t,z_t,u^0_t\bigr)-\mathscr{K}[t].
		\end{cases}
	\end{align}
	
	Let $\mathcal{H}$ be the Hamiltonian function
	\[
	\mathcal{H}(t,v) = \mathbb{E}\Big\langle \kappa\bigl(t,x^0_t,z^0_t,v\bigr),\, \psi_t\Big\rangle 
	- \frac{(b-t)^{\alpha-1}}{\Gamma(\alpha)}l\bigl(t,x^0_t,z^0_t,v\bigr).
	\]
	
	For any $v\in \mathcal{U}_{ad}$, $t_0\in[0,b)$ and $0<\varepsilon\leq b-t_0$, 
	define a \emph{spike variational control} by
	\[
	u^{\varepsilon}_t = 
	\begin{cases}
		v, & t \in [t_0,\,t_0+\varepsilon], \\[1mm]
		u^0_t, & \text{otherwise}.
	\end{cases}
	\]
	
	Let $(x^{\varepsilon}_{(\cdot)},z^{\varepsilon}_{(\cdot)})$ be the solution of \eqref{e1} corresponding to $u^{\varepsilon}_{(\cdot)}$, and let $(p^{\varepsilon}_{(\cdot)},q^{\varepsilon}_{(\cdot)})$ be the solution of the linear BSDE
	\begin{align}\label{ad1}
		p^{\varepsilon}_t&=\int_t^b (s-t)^{\alpha-1}P_{\alpha}(s-t)\kappa_x[s]p^{\varepsilon}_s\, ds+\int_t^b (s-t)^{\alpha-1}P_{\alpha}(s-t) \kappa_z[s]q^{\varepsilon}_s\, ds\nonumber\\
		&+\int_t^b (s-t)^{\alpha-1}P_{\alpha}(s-t)\Delta u_{\varepsilon}\kappa[s]\, ds+\int_t^b (s-t)^{\alpha-1}P_{\alpha}(s-t)q^{\varepsilon}_s\, dB_s.
	\end{align}
	
	\begin{theorem}
		Let Assumption \ref{as1} hold. Then
		\begin{align}\label{mom}\begin{cases}
				\sup_{0\le t\le b}\mathbb{E}\|p^{\varepsilon}_t\|^2 + \mathbb{E}\int_0^b \|q^{\varepsilon}_t\|^2 dt = O(\varepsilon^{2\alpha}),\\
				\sup_{0\le t\le b}\mathbb{E}\|p^{\varepsilon}_t\|^4 + \mathbb{E}\int_0^b \|q^{\varepsilon}_t\|^4 dt = O(\varepsilon^{4\alpha}),\\
				\sup_{0\le t\le b}\mathbb{E}\|x^{\varepsilon}_t-x^0_t-p^{\varepsilon}_t\|^2 + \mathbb{E}\int_0^b \|z^{\varepsilon}_t-z^0_t-q^{\varepsilon}_t\|^2 dt = o(\varepsilon^{2\alpha}).
			\end{cases}
		\end{align}
		Moreover, the first variation formula holds:
		\begin{align}\label{variation-fractional}
			&\qquad\mathcal{J}(u^{\varepsilon})-\mathcal{J}(u^0)
			= \mathbb{E}\langle h_x(x^0_0),\, p^{\varepsilon}_0\rangle
			+\frac{1}{\Gamma(\alpha)}\mathbb{E}\int_0^b (b-s)^{\alpha-1}\langle l_x[s],\, p^{\varepsilon}_s\rangle\, ds\nonumber\\
			&+\frac{1}{\Gamma(\alpha)}\mathbb{E}\int_0^b (b-s)^{\alpha-1}\langle l_z[s],\, q^{\varepsilon}_s\rangle\, ds
			+\frac{1}{\Gamma(\alpha)}\mathbb{E}\int_0^b (b-s)^{\alpha-1}\Delta l_u[s]\, ds + o(\varepsilon^{\alpha}).
		\end{align}
	\end{theorem}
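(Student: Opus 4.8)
The plan is to deduce all four assertions from sharp a priori estimates for the mild (resolvent) form of linear fractional BSEEs of the type \eqref{ad1} --- and of the equations satisfied by $x^\varepsilon-x^0$ and by the second-order remainder --- together with first- and second-order Taylor expansions of $\kappa,l,h$ around the optimal triple $(x^0,z^0,u^0)$. Throughout I would exploit that $\alpha>\tfrac12$ makes both $(s-t)^{\alpha-1}$ and $(s-t)^{2\alpha-2}$ integrable on $[t,b]$, so that the fractional convolution and the fractional stochastic convolution admit finite $L^2$- and (via Burkholder--Davis--Gundy) $L^4$-bounds; the improved exponent $\varepsilon^{2\alpha}$ in place of $\varepsilon$ comes from keeping the forcing term inside the convolution rather than bounding it crudely by its $L^2$-norm.

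\textbf{Step 1: the $O(\varepsilon^{2\alpha})$ and $O(\varepsilon^{4\alpha})$ bounds.} First I would note that the only inhomogeneity in \eqref{ad1} is $F^\varepsilon_t:=\int_t^b (s-t)^{\alpha-1}P_\alpha(s-t)\,\Delta u_\varepsilon\kappa[s]\,ds$, whose integrand vanishes for $s\notin[t_0,t_0+\varepsilon]$. Using Lemma \ref{lem2}, the Cauchy--Schwarz inequality, the subadditivity estimate $\int_{t_0}^{t_0+\varepsilon}(s-t)^{\alpha-1}\,ds\le \varepsilon^{\alpha}/\alpha$ (for all $t\le t_0+\varepsilon$), and the fact that $\sup_s\mathbb{E}\|\Delta u_\varepsilon\kappa[s]\|^2$ is bounded (so that \eqref{e1}--\eqref{e2} are well posed), I would get $\sup_{0\le t\le b}\mathbb{E}\|F^\varepsilon_t\|^2=O(\varepsilon^{2\alpha})$. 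Since the driver of \eqref{ad1} is Lipschitz in $(p,q)$ with constants controlled by $\|\kappa_x\|$ and $\|\kappa_z\|$ (Assumption \ref{as1}(ii)), the a priori estimate for \eqref{ad1} --- proved by a contraction argument on the norm $\sup_t\mathbb{E}\|\cdot\|^2+\mathbb{E}\int_0^b\|\cdot\|^2$, the $q$-contribution being absorbed thanks to $2\alpha-2>-1$ --- then yields the first line of \eqref{mom}. Repeating the argument with fourth powers, using Burkholder--Davis--Gundy for the stochastic convolution and H\"older for the deterministic one so that $\sup_t\mathbb{E}\|F^\varepsilon_t\|^4=O(\varepsilon^{3\alpha})\cdot O(\varepsilon^{\alpha})=O(\varepsilon^{4\alpha})$, gives the second line.

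\textbf{Step 2: the $o(\varepsilon^{2\alpha})$ remainder.} Next, with $\rho^\varepsilon:=x^\varepsilon-x^0-p^\varepsilon$ and $\sigma^\varepsilon:=z^\varepsilon-z^0-q^\varepsilon$, I would subtract \eqref{e1} written for $u^\varepsilon$ and for $u^0$ (the terminal values being the same), then subtract \eqref{ad1}, and expand $\kappa(s,x^\varepsilon_s,z^\varepsilon_s,u^\varepsilon_s)-\kappa(s,x^0_s,z^0_s,u^\varepsilon_s)=\widetilde\kappa^\varepsilon_x(s)(x^\varepsilon_s-x^0_s)+\widetilde\kappa^\varepsilon_z(s)(z^\varepsilon_s-z^0_s)$ with $\widetilde\kappa^\varepsilon_x(s)=\int_0^1\kappa_x\bigl(s,x^0_s+\theta(x^\varepsilon_s-x^0_s),z^0_s+\theta(z^\varepsilon_s-z^0_s),u^\varepsilon_s\bigr)d\theta$ and $\widetilde\kappa^\varepsilon_z$ analogous. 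This shows $(\rho^\varepsilon,\sigma^\varepsilon)$ solves a linear fractional BSEE with zero terminal value, bounded driver coefficients $\widetilde\kappa^\varepsilon_x,\widetilde\kappa^\varepsilon_z$, and forcing $(\widetilde\kappa^\varepsilon_x(s)-\kappa_x[s])p^\varepsilon_s+(\widetilde\kappa^\varepsilon_z(s)-\kappa_z[s])q^\varepsilon_s$. By the a priori estimate of Step 1 it then remains to show this forcing is $o(\varepsilon^{2\alpha})$ in $L^2(\Omega\times[0,b])$. The same estimate as for $F^\varepsilon$ gives $\sup_t\mathbb{E}\|x^\varepsilon_t-x^0_t\|^2+\mathbb{E}\int_0^b\|z^\varepsilon_t-z^0_t\|^2dt=O(\varepsilon^{2\alpha})\to0$, so by continuity of $\kappa_x,\kappa_z$ and dominated convergence (domination from Assumption \ref{as1}(ii), and the set $[t_0,t_0+\varepsilon]$ where $u^\varepsilon\ne u^0$ shrinking to a point) one has $\mathbb{E}\int_0^b\|\widetilde\kappa^\varepsilon_x(s)-\kappa_x[s]\|^4ds\to0$ and likewise for $z$. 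Combining this with $\mathbb{E}\int_0^b\|p^\varepsilon_s\|^4ds+\mathbb{E}\int_0^b\|q^\varepsilon_s\|^4ds=O(\varepsilon^{4\alpha})$ from Step 1 and applying Cauchy--Schwarz in $L^2(\Omega\times[0,b])$ bounds each piece of the forcing by $o(1)\cdot O(\varepsilon^{2\alpha})$; a standard subsequence argument upgrades this to $o(\varepsilon^{2\alpha})$ along $\varepsilon\to0$, which is the third line of \eqref{mom}.

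\textbf{Step 3 and the main difficulty.} For the variation formula I would write $\mathcal{J}(u^\varepsilon)-\mathcal{J}(u^0)$ as $\mathbb{E}[h(x^\varepsilon_0)-h(x^0_0)]+\tfrac1{\Gamma(\alpha)}\mathbb{E}\int_0^b(b-s)^{\alpha-1}[l(s,x^\varepsilon_s,z^\varepsilon_s,u^\varepsilon_s)-l(s,x^0_s,z^0_s,u^0_s)]ds$, expand each increment by the fundamental theorem of calculus, split off the pure control perturbation (which produces exactly $\Delta l_u[s]$, supported on $[t_0,t_0+\varepsilon]$), and replace the intermediate evaluation points of $h_x,l_x,l_z$ by $(x^0,z^0,u^0)$ --- the replacement error being $o(\varepsilon^{\alpha})$ by continuity, dominated convergence, the state estimates $O(\varepsilon^\alpha)$, and square-integrability of the weight $(b-s)^{\alpha-1}$ since $\alpha>\tfrac12$. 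Substituting $x^\varepsilon_s-x^0_s=p^\varepsilon_s+\rho^\varepsilon_s$ and $z^\varepsilon_s-z^0_s=q^\varepsilon_s+\sigma^\varepsilon_s$ and discarding the $\rho^\varepsilon,\sigma^\varepsilon$ terms --- which are $o(\varepsilon^\alpha)$ by Cauchy--Schwarz against $h_x(x^0_0)$, $(b-s)^{\alpha-1}l_x[s]$ and $(b-s)^{\alpha-1}l_z[s]$ (all in the relevant $L^2$ spaces by Assumption \ref{as1}(ii) and well-posedness of \eqref{e1}) together with Step 2 --- leaves exactly \eqref{variation-fractional}. The hardest part, I expect, is Step 2 and the sharp a priori estimate underlying it: securing the rate $O(\varepsilon^{2\alpha})$ (strictly better than the $O(\varepsilon)$ coming from the crude bound $\mathbb{E}\int_0^b\|\Delta u_\varepsilon\kappa[s]\|^2ds=O(\varepsilon)$) forces one to keep the fractional-convolution structure of the forcing and to use $\alpha>\tfrac12$ with care, and the step from $O(\varepsilon^{2\alpha})$ to $o(\varepsilon^{2\alpha})$ is precisely what makes the fourth-moment bounds in \eqref{mom} indispensable.
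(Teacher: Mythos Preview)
Your proposal is correct and follows essentially the same route as the paper: bound the spike forcing by $O(\varepsilon^{\alpha})$ via $\int_{t_0}^{t_0+\varepsilon}(s-t)^{\alpha-1}\,ds\le \varepsilon^{\alpha}/\alpha$, feed this into a Gronwall/a priori estimate for the linear fractional BSEE to get the first two lines of \eqref{mom}, then derive the equation for the remainder $(\rho^\varepsilon,\sigma^\varepsilon)$ and combine the fourth-moment bound with Cauchy--Schwarz and continuity of $\kappa_x,\kappa_z$ to show its forcing is $o(\varepsilon^{2\alpha})$, finally obtaining \eqref{variation-fractional} by Taylor expansion. If anything, your Step~2 is more explicit than the paper's (which just asserts $\mathbb{E}\|R^\varepsilon_s\|^2=o(\varepsilon^{2\alpha})$): you write the remainder coefficients in integral mean-value form and spell out exactly why the $O(\varepsilon^{4\alpha})$ bound is needed --- and note that your ``subsequence argument'' is unnecessary, since $o(1)\cdot O(\varepsilon^{2\alpha})$ is already $o(\varepsilon^{2\alpha})$.
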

	
	\begin{proof}
		Using \eqref{ad1} and Jensen’s inequality, we get
		\begin{align*}
			\mathbb{E}\|p^{\varepsilon}_t\|^2 &\le 4\,\mathbb{E}\Big\|\int_t^b (s-t)^{\alpha-1}P_{\alpha}(s-t) \kappa_x[s]\, p^{\varepsilon}_s\,ds\Big\|^2
			+ 4\,\mathbb{E}\Big\|\int_t^b (s-t)^{\alpha-1}P_{\alpha}(s-t) \kappa_z[s]\, q^{\varepsilon}_s\,ds\Big\|^2\\
			& + 4\,\mathbb{E}\Big\|\int_t^b (s-t)^{\alpha-1}P_{\alpha}(s-t)\Delta_u \kappa[s]\,ds\Big\|^2
			+ 4\,\mathbb{E}\Big\|\int_t^b (s-t)^{\alpha-1}P_{\alpha}(s-t) q^{\varepsilon}_s\,dB_s\Big\|^2.
		\end{align*}
		Using lemma \ref{lem2}, assumption \ref{as1} and Cauchy--Schwarz inequality we obtain
		\begin{align*}
			\mathbb{E}\Big\|\int_t^b (s-t)^{\alpha-1}P_{\alpha}(s-t) \kappa_x[s]\, p^{\varepsilon}_s\,ds\Big\|^2 
			&\leq \mathbb{E}\Bigg( \int_t^b (s-t)^{\alpha-1}\Vert P_{\alpha}(s-t)\Vert \Vert \kappa_x[s]\Vert \Vert p^{\varepsilon}_s\Vert\, ds \Bigg)^2\\
			&\leq \Bigg(\frac{\alpha M_1C}{\Gamma(1+\alpha)}\Bigg)^2 \mathbb{E}\Bigg( \int_t^b (s-t)^{\alpha-1} \Vert p^{\varepsilon}_s\Vert\, ds \Bigg)^2\\
			&\leq  \Bigg(\frac{\alpha M_1C}{\Gamma(1+\alpha)}\Bigg)^2 \frac{(b-t)^{2\alpha-1}}{2\alpha-1}\int_t^b \mathbb{E}\Vert p^{\varepsilon}_s\Vert^2\, ds\\
			&\leq C_1 \int_t^b \mathbb{E}\Vert p^{\varepsilon}_s\Vert^2\, ds,
		\end{align*}
		where 
		\begin{align*}
			C_1=  \Bigg(\frac{\alpha M_1C}{\Gamma(1+\alpha)}\Bigg)^2 \frac{b^{2\alpha-1}}{2\alpha-1}.
		\end{align*}
		And similarly, we obtain
		\[
		\mathbb{E}\Big\|\int_t^b (s-t)^{\alpha-1}P_{\alpha}(s-t) \kappa_z[s]\, q^{\varepsilon}_s\,ds\Big\|^2 \leq C_1\int_t^b \mathbb{E}\|q^{\varepsilon}_s\|^2 ds.
		\]
		Since \(\Delta_u \kappa_s=0\) for \(s\notin[t_0,t_0+\varepsilon]\) and \(\Delta_u \kappa\) is bounded on the spike,
		\begin{align}\label{est:spike}
			\Big\|\int_t^b (s-t)^{\alpha-1}P_\alpha(s-t)\Delta_u \kappa_s\,ds\Big\|
			&\leq \frac{CM_1\alpha}{\Gamma(1+\alpha)} \int_{t_0}^{t_0+\varepsilon} (s-t)^{\alpha-1} ds =\frac{CM_1}{\Gamma(1+\alpha)}\varepsilon^{\alpha}.
		\end{align}
		
		Thus
		\begin{equation}\label{est:spike2}
			\mathbb{E}\Big\|\int_t^b (s-t)^{\alpha-1}P_\alpha(s-t)\Delta_u \kappa_s\,ds\Big\|^2
			\leq C_2\varepsilon^{2\alpha},
		\end{equation}
		where 
		\begin{align*}
			C_2=\left(\frac{CM_1}{\Gamma(1+\alpha)}\right)^2.
		\end{align*}
		
		Use Itô isometry for the stochastic part, we obtain
		\begin{align}\label{est:ito}
			&\mathbb{E}\Big\|\int_t^b (s-t)^{\alpha-1}P_\alpha(s-t)q^\varepsilon_s\,dB_s\Big\|^2 ds
			\le C_3\int_t^b (s-t)^{2\alpha-2}\mathbb{E}\|q^\varepsilon_s\|^2 ds,
		\end{align}
		where 
		\begin{align*}
			C_3=\left(\frac{\alpha M_1}{\Gamma(1+\alpha)}  \right)^2 .
		\end{align*}
		
		Then, there exists \(C_4\) such that
		\begin{equation}\label{volterra1}
			\mathbb{E}\|p^\varepsilon_t\|^2
			\leq C_4\int_t^b\mathbb{E}\|p^\varepsilon_s\|^2\, ds+C_4\int_t^b\mathbb{E}\|q^\varepsilon_s\|^2\, ds+ C_4\int_t^b (s-t)^{2\alpha-2}\mathbb{E}\|q^\varepsilon_s\|^2\, ds + C_4\varepsilon^{2\alpha},
		\end{equation}
		where 
		\begin{align*}
			C_4=\max_{i=1,2,3}\left\{C_i\right\}.
		\end{align*}
		
		Integrate \eqref{volterra1} with respect to \(t\) over \([0,b]\) and change order using Fubini:
		\begin{align*}
			\int_0^b \mathbb{E}\|p^\varepsilon_t\|^2 dt
			&\le C_4\int_0^b \int_t^b \mathbb{E}\|p^\varepsilon_s\|^2\, ds\, dt+ C_4\int_0^b \int_t^b \mathbb{E}\|q^\varepsilon_s\|^2\, ds\, dt\\
			&+C_4\int_0^b \int_t^b  (s-t)^{2\alpha-2}\mathbb{E}\|q^\varepsilon_s\|^2 \,ds\, dt + C_4 b\varepsilon^{2\alpha}\\
			&= C_4 \int_0^b s\mathbb{E}\Vert p^{\varepsilon}_s\Vert^2\, ds+ C_4 \int_0^b s\mathbb{E}\Vert q^{\varepsilon}_s\Vert^2\, ds\\
			&+C_4 \int_0^b \frac{s^{2\alpha-1}}{2\alpha-1}\mathbb{E}\Vert q^{\varepsilon}_s\Vert^2\, ds+ C_4 b\varepsilon^{2\alpha}\\
			&\leq C_4b \int_0^b \mathbb{E}\Vert p^{\varepsilon}_s\Vert^2\, ds+ C_4\left(b+\frac{b^{2\alpha-1}}{2\alpha-1}\right)\int_0^b \mathbb{E}\Vert q^{\varepsilon}_s\Vert^2\, ds+ C_4 b\varepsilon^{2\alpha},
		\end{align*}
		where 
		\begin{align*}
			\widetilde{C}=\max\left\{ C_4b,\, C_4\left(b+\frac{b^{2\alpha-1}}{2\alpha-1}\right) \right\}.
		\end{align*}
		Define
		\[
		\Phi_t:=\sup_{t\le r\le b}\mathbb{E}\|p^\varepsilon_r\|^2 + \mathbb{E}\int_t^b \mathbb{E}\|q^\varepsilon_s\|^2 ds.
		\]
		From \eqref{volterra1} and the above integral bound we obtain 
		\[
		\Phi_t\leq  \widetilde{C}\int_t^b \Phi_s\,ds +  \widetilde{C}\varepsilon^{2\alpha},\qquad 0\le t\le b.
		\]
		Applying Gronwall's inequality on \([0,b]\) yields
		\[
		\Phi_0\le  \widetilde{C}\varepsilon^{2\alpha},
		\]
		i.e.
		\[
		\sup_{0\le t\le b}\mathbb{E}\|p^\varepsilon_t\|^2 + \mathbb{E}\int_0^b \|q^\varepsilon_s\|^2 ds = O(\varepsilon^{2\alpha}).
		\]

		Next, we show the fourth-moment estimate. Using \eqref{ad1}, the Burkholder--Davis--Gundy inequality, Hölder inequality and Lemma \ref{lem2}, we have
		\begin{align*}
			\mathbb{E}\|p^\varepsilon_t\|^4 &\le 64\,\mathbb{E}\Big\|\int_t^b (s-t)^{\alpha-1} P_\alpha(s-t) \kappa_x[s] p^\varepsilon_s\, ds \Big\|^4
			+ 64\,\mathbb{E}\Big\|\int_t^b (s-t)^{\alpha-1} P_\alpha(s-t) \kappa_z[s] q^\varepsilon_s\, ds \Big\|^4\\
			& + 64\,\mathbb{E}\Big\|\int_t^b (s-t)^{\alpha-1} P_\alpha(s-t) \Delta_u \kappa[s]\, ds \Big\|^4
			+ 64\,\mathbb{E}\Big\|\int_t^b (s-t)^{\alpha-1} P_\alpha(s-t) q^\varepsilon_s\, dB_s \Big\|^4.
		\end{align*}
		
		For the deterministic integrals, by Lemma \ref{lem2}, Assumption \ref{as1} and Hölder inequality, we obtain
		\begin{align*}
			\mathbb{E}\Big\|\int_t^b (s-t)^{\alpha-1} P_\alpha(s-t) \kappa_x[s] p^\varepsilon_s\, ds \Big\|^4&\leq \left(\frac{CM_1\alpha}{\Gamma(1+\alpha)}\right)^4\mathbb{E}\left\{  \int_t^b (s-t)^{\alpha-1}\Vert p^{\varepsilon}_s\Vert\, ds \right\}^4\\
			&\leq\left(\frac{CM_1\alpha}{\Gamma(1+\alpha)}\right)^4\left(\int_t^b (s-t)^{\frac{4}{3}(\alpha-1)}\, ds\right)^3\int_t^b \mathbb{E}\Vert p^{\varepsilon}_s\Vert^4\, ds\\
			&\leq C_1 \int_t^b \mathbb{E}\|p^\varepsilon_s\|^4 ds,
		\end{align*}
		and similarly
		\[
		\mathbb{E}\Big\|\int_t^b (s-t)^{\alpha-1} P_\alpha(s-t) \kappa_z[s] q^\varepsilon_s\, ds \Big\|^4
		\le C_1 \int_t^b \mathbb{E}\|q^\varepsilon_s\|^4 ds,
		\]
		where 
		\begin{align*}
			C_1=\left(\frac{CM_1\alpha}{\Gamma(1+\alpha)}\right)^4 \frac{27\,b^{4\alpha-1}}{(4\alpha-1)^3}.
		\end{align*}
		
		For the spike integral, using \eqref{est:spike} and the bound of \(P_\alpha\),
		\[
		\mathbb{E}\Big\|\int_t^b (s-t)^{\alpha-1} P_\alpha(s-t) \Delta_u \kappa[s]\, ds\Big\|^4 \le C_2 \varepsilon^{4\alpha},
		\]
		where 
		\begin{align*}
			C_2 := \Big(\frac{ M_1 C}{ \Gamma(1+\alpha)}\Big)^4.
		\end{align*}
		For the stochastic integral, by the Burkholder--Davis--Gundy, Cauchy-Shwartz inequalities and Lemma \ref{lem2},
		\begin{align*}
			\mathbb{E}\Big\|\int_t^b (s-t)^{\alpha-1} P_\alpha(s-t) q^\varepsilon_s\, dB_s \Big\|^4
			&\le\Big(\frac{\alpha M_1}{\Gamma(1+\alpha)}\Big)^4 \, \mathbb{E} \Big(\int_t^b (s-t)^{2\alpha-2} \|q^\varepsilon_s\|^2 ds \Big)^2 \\
			&\le\Big(\frac{\alpha M_1}{\Gamma(1+\alpha)}\Big)^4 \, \int_t^b (s-t)^{2\alpha-2}\, ds\,\int_t^b   (s-t)^{2\alpha-2} \mathbb{E} \|q^\varepsilon_s\|^4 ds  \\
			&\le C_3 \,\int_t^b  (s-t)^{2\alpha-2}\mathbb{E}\|q^\varepsilon_s\|^4 ds,
		\end{align*}
		with
		\[
		C_3 :=  \frac{b^{2\alpha-1}}{2\alpha-1} \Big(\frac{\alpha M_1}{\Gamma(1+\alpha)}\Big)^4.
		\]

		Integrating the above estimates with respect to \(t\in[0,b]\) and swapping the order using Fubini’s theorem, we get
		
		\begin{align*}
			\int_0^b \mathbb{E}\|p^\varepsilon_t\|^4 dt
			&\le C_1\int_0^b  \int_t^b \mathbb{E}\|p^\varepsilon_s\|^4 ds\, dt 
			+  C_1\int_0^b \int_t^b \mathbb{E}\|q^\varepsilon_s\|^4 ds\, dt   \\
			&+C_3\int_0^b \int_t^b (s-t)^{2\alpha-2}\mathbb{E}\|q^\varepsilon_s\|^4 ds\, dt   +C_2b \varepsilon^{4\alpha} \\
			&= C_1 \int_0^b s\, \mathbb{E}\|p^\varepsilon_s\|^4 ds 
			+ C_1\, \int_0^bs\, \mathbb{E}\|q^\varepsilon_s\|^4 ds \\
			&+C_3\, \int_0^b \frac{s^{2\alpha-1}}{2\alpha-1}\mathbb{E}\Vert q^{\varepsilon}_s\Vert^4\, ds+ C_2 b \varepsilon^{4\alpha} \\
			&= C_1 \, b \int_0^b  \, \mathbb{E}\|p^\varepsilon_s\|^4 \,ds
			+ C_1\,b \int_0^b  \, \mathbb{E}\|q^\varepsilon_s\|^4 \,ds\\
			&+C_3\, \frac{b^{2\alpha-1}}{2\alpha-1}\int_0^b  \, \mathbb{E}\|q^\varepsilon_s\|^4 \,ds+ C_2 b \varepsilon^{4\alpha} \\
			&\le C_4 \int_0^b \mathbb{E}\|p^\varepsilon_s\|^4 ds
			+ C_4 \int_0^b \mathbb{E}\|q^\varepsilon_s\|^4 \,ds+ C_4 \varepsilon^{4\alpha},
		\end{align*}
		
		where we have defined
		\[
		C_4 := \max\left\{C_2b,\, C_1 b+C_3\, \frac{b^{2\alpha-1}}{2\alpha-1}\right\}.
		\]
		
		Define
		\[
		\tilde{\Phi}_t := \sup_{r\in[t,b]} \mathbb{E}\|p^\varepsilon_r\|^4 + \int_t^b \mathbb{E}\|q^\varepsilon_s\|^4 ds.
		\]
		
		Then, from the above inequality, we have 
		\[
		\tilde{\Phi}_t \le C_4 \int_t^b \tilde{\Phi}_s ds + C_4 \varepsilon^{4\alpha}, \qquad t\in[0,b].
		\]
		
		Applying Gronwall's inequality yields
		\[
		\sup_{0\le t \le b} \mathbb{E}\|p^\varepsilon_t\|^4 + \int_0^b \mathbb{E}\|q^\varepsilon_s\|^4 ds
		= O(\varepsilon^{4\alpha}).
		\]
		\medskip
		
		Next, define the remainder
		\[
		\tilde{x}^\varepsilon := x^\varepsilon - x^0 - p^\varepsilon, \qquad \tilde{z}^\varepsilon := z^\varepsilon - z^0 - q^\varepsilon.
		\]
		
		Then \((\tilde{x}^\varepsilon, \tilde{z}^\varepsilon)\) satisfies
		\begin{align*}
			\tilde{x}^\varepsilon_t &= \int_t^b (s-t)^{\alpha-1} P_\alpha(s-t) \Big(\kappa_x[s] \tilde{y}^\varepsilon_s + \kappa_z[s] \tilde{z}^\varepsilon_s + R^\varepsilon_s\Big) ds \\
			& + \int_t^b (s-t)^{\alpha-1} P_\alpha(s-t) \tilde{z}^\varepsilon_s dB_s,
		\end{align*}
		where
		\[
		R^\varepsilon_s := \Delta_x \kappa_s - \kappa_x[s] p^\varepsilon_s + \Delta_z \kappa_s - \kappa_z[s] q^\varepsilon_s.
		\]
		
		By Assumption \ref{as1} and the previous estimates, \(\mathbb{E}\|R^\varepsilon_s\|^2 = o(\varepsilon^{2\alpha})\). Using the same nested-integral technique as above, we obtain
		\[
		\sup_{0\le t \le b} \mathbb{E}\|\tilde{x}^\varepsilon_t\|^2 + \int_0^b \mathbb{E}\|\tilde{z}^\varepsilon_s\|^2 ds = o(\varepsilon^{2\alpha}).
		\]
		
		\medskip
		
		Finally, we conclude
		\begin{align*}
			&\sup_{0\le t \le b} \mathbb{E}\|x^\varepsilon_t - x^0_t - p^\varepsilon_t\|^2 + \int_0^b \mathbb{E}\|z^\varepsilon_s - z^0_s - q^\varepsilon_s\|^2 ds = o(\varepsilon^{2\alpha}).
		\end{align*}

		For variational formula, applying Taylor formula, we have
		\begin{align*}
			&\qquad\mathcal{J}(u^{\varepsilon})-\mathcal{J}(u^0)=\mathbb{E}\left[  h(x^{\varepsilon}_0-h(x^0_0)\right]\\
			&+\frac{1}{\Gamma(\alpha)}\mathbb{E}\int_0^b (b-s)^{\alpha-1}\left(l(s,x^{\varepsilon}_s,z^{\varepsilon}_s,u^{\varepsilon}_s)-l(s,x^{0}_s,z^{\varepsilon}_s,u^{\varepsilon}_s)\right)\, ds\\
			&+\frac{1}{\Gamma(\alpha)}\mathbb{E}\int_0^b (b-s)^{\alpha-1}\left(l(s,x^{0}_s,z^{\varepsilon}_s,u^{\varepsilon}_s)-l(s,x^{0}_s,z^{0}_s,u^{\varepsilon}_s)\right)\, ds\\
			&+\frac{1}{\Gamma(\alpha)}\mathbb{E}\int_0^b (b-s)^{\alpha-1}\left(l(s,x^{0}_s,z^{0}_s,u^{\varepsilon}_s)-l(s,x^{0}_s,z^{0}_s,u^{0}_s)\right)\, ds\\
			&=\mathbb{E}\int_0^1 \left\langle h_x\bigl(x^0_0+\delta (x^{\varepsilon}_0-x^0_0)\bigr), \, x^{\varepsilon}_0-x^0_0 \right\rangle d\delta \\
			&+\frac{1}{\Gamma(\alpha)}\mathbb{E}\int_0^b (b-s)^{\alpha-1}\Big\langle l_x\bigl(s,x^0_s+\delta (x^{\varepsilon}_s-x^0_s), z^{\varepsilon}_s,u^{\varepsilon}_s\bigr),\, x^{\varepsilon}_s-x^0_s\Big\rangle ds \\
			&+\frac{1}{\Gamma(\alpha)}\mathbb{E}\int_0^b (b-s)^{\alpha-1}\Big\langle l_z\bigl(s,x^0_s, z^0_s+\delta (z^{\varepsilon}_s-z^0_s), u^{\varepsilon}_s\bigr),\, z^{\varepsilon}_s-z^0_s\Big\rangle ds\\
			&+\frac{1}{\Gamma(\alpha)}\mathbb{E}\int_0^b (b-s)^{\alpha-1}\left(l(s,x^{0}_s,z^{0}_s,u^{\varepsilon}_s)-l(s,x^{0}_s,z^{0}_s,u^{0}_s)\right)\, ds.
		\end{align*}
		
		Define the remainders 
		\[
		X^{\varepsilon}_t = x^{\varepsilon}_t-x^0_t-p^{\varepsilon}_t,\qquad 
		Z^{\varepsilon}_t = z^{\varepsilon}_t-z^0_t-q^{\varepsilon}_t.
		\]
		Then we have
		\begin{align*}
			&\mathcal{J}(u^{\varepsilon})-\mathcal{J}(u^0)\\
			&=\mathbb{E}\langle h_x(x^{0}_0),p^{\varepsilon}_0\rangle +\mathbb{E}\langle h_x(x^{0}_0),X^{\varepsilon}_0\rangle\\
			&+\mathbb{E}\int_0^1 \left\langle h_x\left(x^0_0+\delta (x^{\varepsilon}_0 - x^0_0)\right)-h_x(x^0_0), p^{\varepsilon}_0+X^{\varepsilon}_0 \right\rangle\, d\delta\\
			&+\frac{1}{\Gamma(\alpha)}\, \mathbb{E} \int_0^b (b-s)^{\alpha-1}  \langle l_x[s], p^{\varepsilon}_s+X^{\varepsilon}_s\rangle\, ds\\
			&+\frac{1}{\Gamma(\alpha)}\, \mathbb{E} \int_0^b (b-s)^{\alpha-1}  \langle l_z[s], q^{\varepsilon}_s+Z^{\varepsilon}_s\rangle\, ds
			+\frac{1}{\Gamma(\alpha)}\, \mathbb{E}\int_0^b (b-s)^{\alpha-1}\Delta l_u[s]\, ds\\
			&+\frac{1}{\Gamma(\alpha)}\, \mathbb{E}\int_0^b (b-s)^{\alpha-1}  \left\langle l_x\bigl(s,x^0_s+\delta (x^{\varepsilon}_s-x^0_s), Z^{\varepsilon}_s,u^{\varepsilon}_s\bigr)-l_x[s], p^{\varepsilon}_s+X^{\varepsilon}_s \right\rangle\, ds\\
			&+\frac{1}{\Gamma(\alpha)}\, \mathbb{E}\int_0^b (b-s)^{\alpha-1}  \left\langle l_z\bigl(s,x^{\varepsilon}_s,z^0_s+\delta (z^{\varepsilon}_s-z^0_s),u^{\varepsilon}_s\bigr)-l_z[s], q^{\varepsilon}_s+Z^{\varepsilon}_s \right\rangle \,ds.
		\end{align*}
		\noindent
		Using the moment estimates \eqref{mom}
		and assumption \ref{as1}, we have
		\begin{align*}
			\begin{cases}
				\mathbb{E}\langle h_x(x^{0}_0),X^{\varepsilon}_0\rangle = o(\varepsilon^\alpha),\\
				\mathbb{E}\int_0^b (b-s)^{\alpha-1} \langle l_x[s],X^\varepsilon_s\rangle ds = o(\varepsilon^\alpha),\\
				\mathbb{E}\int_0^b (b-s)^{\alpha-1} \langle l_z[s],Z^\varepsilon_s\rangle ds = o(\varepsilon^\alpha),\\
				\mathbb{E}\int_0^1 \left\langle h_x\big(x^0_0+\delta (x^\varepsilon_0-x^0_0)\big)-h_x(x^0_0), p^\varepsilon_0+X^\varepsilon_0 \right\rangle d\delta = o(\varepsilon^\alpha),\\
				\mathbb{E}\int_0^b (b-s)^{\alpha-1} \left\langle l_x\big(s, x^0_s+\delta(x^\varepsilon_s-x^0_s), z^\varepsilon_s, u^\varepsilon_s\big)-l_x[s], p^\varepsilon_s+X^\varepsilon_s \right\rangle ds = o(\varepsilon^\alpha),\\
				\mathbb{E}\int_0^b (b-s)^{\alpha-1} \left\langle l_z\big(s, x^\varepsilon_s, z^0_s+\delta(z^\varepsilon_s-z^0_s), u^\varepsilon_s\big)-l_z[s], q^\varepsilon_s+Z^\varepsilon_s \right\rangle ds = o(\varepsilon^\alpha).
			\end{cases}
		\end{align*}
		
		Therefore, all remainder terms are of order $o(\varepsilon^\alpha)$, and we conclude the first variation formula
		\begin{align}\label{variation-fractional-final}
			\mathcal{J}(u^\varepsilon) - \mathcal{J}(u^0)
			&= \mathbb{E}\langle h_x(x^0_0), p^\varepsilon_0\rangle
			+ \frac{1}{\Gamma(\alpha)} \mathbb{E} \int_0^b (b-s)^{\alpha-1} \langle l_x[s], p^\varepsilon_s \rangle ds \nonumber\\
			& + \frac{1}{\Gamma(\alpha)} \mathbb{E} \int_0^b (b-s)^{\alpha-1} \langle l_z[s], q^\varepsilon_s \rangle ds
			+ \frac{1}{\Gamma(\alpha)} \mathbb{E} \int_0^b (b-s)^{\alpha-1} \Delta l_u[s] ds + o(\varepsilon^\alpha).
		\end{align}

	\end{proof}
	\begin{theorem}\label{w1}
		Assume that assumption \eqref{as1} hold, and let $(x^0, z^0, u^0)$ be an optimal triple of Problem A. Then, there exists a process $\psi$ satisfying \eqref{30} such that
		\begin{equation}\label{36}
			\mathcal{H}(t, v) \leq \mathcal{H}(t, u^0_t), \quad \forall v \in U, \ \text{a.e. } t \in [0,b], \ \mathbb{P}\text{-a.s.}
		\end{equation}
	\end{theorem}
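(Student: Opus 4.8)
The plan is to couple the first variation formula just obtained with a duality identity between the variational equation~\eqref{ad1} for $(p^\varepsilon,q^\varepsilon)$ and the adjoint equation~\eqref{30} for $\psi$, and then to localise in time. Since $u^0_{(\cdot)}$ is optimal, $\mathcal{J}(u^\varepsilon)-\mathcal{J}(u^0)\ge 0$ for every spike control $u^\varepsilon$ built from a point $v\in U$, a time $t_0\in[0,b)$ and $\varepsilon\in(0,b-t_0]$; substituting this into~\eqref{variation-fractional-final}, the whole matter reduces to re-expressing
\[
I^\varepsilon:=\mathbb{E}\langle h_x(x^0_0),p^\varepsilon_0\rangle+\frac{1}{\Gamma(\alpha)}\,\mathbb{E}\int_0^b (b-s)^{\alpha-1}\bigl(\langle l_x[s],p^\varepsilon_s\rangle+\langle l_z[s],q^\varepsilon_s\rangle\bigr)\,ds
\]
purely in terms of $\psi$ and the spike of $\kappa$. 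Observe that $\alpha>\tfrac12$ makes $(b-s)^{\alpha-1}$ square-integrable on $[0,b]$, so, together with the moment estimates~\eqref{mom}, $I^\varepsilon=O(\varepsilon)$ and all the integrals below are finite.

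First I would check that~\eqref{30} is well posed, i.e.\ that it possesses a unique adapted solution $\psi$ with $\sup_{0\le t\le b}\mathbb{E}\|\psi_t\|^2<\infty$: equation~\eqref{30} is \emph{linear} in $\psi$, its operator coefficients $\kappa^*_x[\cdot]$ and $\kappa^*_z[\cdot]$ are bounded by Assumption~\ref{as1}(ii), and its inhomogeneous terms $l_x[\cdot]$ and $l_z[\cdot]$ are square-integrable and adapted because $x^0,z^0\in L^2_{\mathcal{F}}$ and $\|l_x\|\le C(1+\|x\|)$, $\|l_z\|\le C(1+\|z\|)$. A contraction-mapping argument on a short time interval (then iterated over $[0,b]$), using the bounds of Lemma~\ref{lem2} for $S_\alpha,P_\alpha$ and the integrability of $(t-s)^{2\alpha-2}$ (again from $\alpha>\tfrac12$) for the It\^o-isometry contribution, goes through exactly as in the proof of the preceding theorem.

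The heart of the argument --- and, I expect, the main obstacle --- is a fractional integration-by-parts / It\^o-product identity pairing the \emph{backward} equation~\eqref{ad1} with the \emph{forward} equation~\eqref{30}. Since $(p^\varepsilon,q^\varepsilon)$ and $\psi$ are only mild (resolvent-family) solutions, It\^o's formula is not directly applicable; I would either (i) replace $A$ by its Yosida approximations $A_n=nA(nI-A)^{-1}$, whose associated equations have strong solutions to which the Hilbert-space It\^o product rule applies, derive the identity at level $n$, and pass to the limit using the uniform bounds of Lemma~\ref{lem2}; or (ii) substitute the mild representations directly, interchange the order of integration by Fubini, and use the semigroup/composition relations for the Wright-function kernels defining $S_\alpha$ and $P_\alpha$. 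Structurally, $\kappa^*_x[s]\psi_s$ in~\eqref{30} is dual to $\kappa_x[s]p^\varepsilon_s$ in~\eqref{ad1} through $\langle\kappa^*_x[s]\psi_s,p^\varepsilon_s\rangle=\langle\psi_s,\kappa_x[s]p^\varepsilon_s\rangle$, and $\kappa^*_z[s]\psi_s$ is dual to $\kappa_z[s]q^\varepsilon_s$ through the quadratic-covariation term (where the Hilbert--Schmidt/trace bookkeeping of the stochastic integrands needs care); these contributions cancel, the terminal condition $p^\varepsilon_b=0$ annihilates the boundary term at $b$, and $\psi_0=S_\alpha(0)h_x(x^0_0)=\Gamma(\alpha)^{-1}h_x(x^0_0)$ is what pairs with $\mathbb{E}\langle h_x(x^0_0),p^\varepsilon_0\rangle$. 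What survives is
\[
I^\varepsilon=-\,\mathbb{E}\int_0^b \bigl\langle\psi_s,\Delta_u\kappa[s]\bigr\rangle\,ds=-\int_{t_0}^{t_0+\varepsilon}\mathbb{E}\bigl\langle\psi_s,\kappa(s,x^0_s,z^0_s,v)-\kappa[s]\bigr\rangle\,ds,
\]
the second equality because $\Delta_u\kappa[s]$ is supported on $[t_0,t_0+\varepsilon]$. Making the fractional weights appearing in~\eqref{e2},~\eqref{ad1} and~\eqref{30} align so that the cross terms genuinely cancel, and pinning down the sign so the result matches the definition of $\mathcal{H}$, is the delicate part.

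Combining the last display with~\eqref{variation-fractional-final} and the definition of $\mathcal{H}$ gives
\[
\mathcal{J}(u^\varepsilon)-\mathcal{J}(u^0)=-\int_{t_0}^{t_0+\varepsilon}\bigl(\mathcal{H}(s,v)-\mathcal{H}(s,u^0_s)\bigr)\,ds+o(\varepsilon),
\]
where the remainder of~\eqref{variation-fractional-final} is in fact $o(\varepsilon)$ here because the $L^2_{\mathcal{F}}([0,b])$-norms of $p^\varepsilon$ and $q^\varepsilon$ are $O(\varepsilon)$. Optimality forces the left-hand side to be nonnegative, so $\varepsilon^{-1}\int_{t_0}^{t_0+\varepsilon}\bigl(\mathcal{H}(s,v)-\mathcal{H}(s,u^0_s)\bigr)\,ds\le o(1)$. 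To upgrade this time-averaged inequality to the pointwise $\mathbb{P}$-a.s.\ statement~\eqref{36}, I would rerun the computation with the $\mathcal{F}_{t_0}$-localised spike $u^\varepsilon_t=v\,\mathbf{1}_A+u^0_t\,\mathbf{1}_{A^c}$ on $[t_0,t_0+\varepsilon]$ for an arbitrary $A\in\mathcal{F}_{t_0}$, obtaining $\varepsilon^{-1}\mathbb{E}\bigl[\mathbf{1}_A\!\int_{t_0}^{t_0+\varepsilon}\bigl(\mathcal{H}(s,v)-\mathcal{H}(s,u^0_s)\bigr)ds\bigr]\le o(1)$; letting $\varepsilon\downarrow 0$ at a Lebesgue point of $s\mapsto\mathcal{H}(s,v)-\mathcal{H}(s,u^0_s)$ and using that $A\in\mathcal{F}_{t_0}$ is arbitrary while $\mathcal{H}(t_0,v)-\mathcal{H}(t_0,u^0_{t_0})$ is $\mathcal{F}_{t_0}$-measurable yields $\mathcal{H}(t_0,v)\le\mathcal{H}(t_0,u^0_{t_0})$ a.s., for a.e.\ $t_0\in[0,b)$. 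Finally, taking $v$ through a countable dense subset of $U$ and invoking the continuity of $v\mapsto\mathcal{H}(t,v)$ from Assumption~\ref{as1}(i) extends the inequality to all $v\in U$, which is~\eqref{36}.
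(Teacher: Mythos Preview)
Your plan is essentially the paper's: invoke the variation formula~\eqref{variation-fractional}, collapse its first three terms via a duality between the variational BSDE~\eqref{ad1} and the adjoint~\eqref{30}, and then localise. The paper's own proof is a three-line sketch that simply \emph{asserts} the duality identity and says ``from here we can easily deduce''~\eqref{36}, so your roadmap (well-posedness of~\eqref{30}, Yosida approximation or direct substitution for the product rule, Lebesgue points, density in $U$) actually supplies what the paper omits.

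Two small discrepancies are worth flagging. First, the paper's duality identity keeps the fractional weight and reads (modulo an apparent typo $p^\varepsilon\!\to\psi$)
\[
I^\varepsilon=\frac{1}{\Gamma(\alpha)}\,\mathbb{E}\int_0^b (b-s)^{\alpha-1}\langle\Delta_{u^\varepsilon}\kappa[s],\psi_s\rangle\,ds,
\]
with a positive sign, so that it combines directly with the $\Delta_u l$ term into $\mathcal{H}(s,v)-\mathcal{H}(s,u^0_s)$; your unweighted $-\mathbb{E}\int\langle\psi_s,\Delta_u\kappa[s]\rangle\,ds$ has both the sign and the weight off. Second, your justification that the remainder of~\eqref{variation-fractional-final} is $o(\varepsilon)$ ``because the $L^2_{\mathcal{F}}$-norms of $p^\varepsilon,q^\varepsilon$ are $O(\varepsilon)$'' is not supported by~\eqref{mom}, which only gives $O(\varepsilon^{\alpha})$; the paper does not address this scaling issue either, but you should not claim a sharper bound than has been established.
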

	
	\begin{proof}
		By formula \eqref{variation-fractional}, we have
		\begin{align*}
			&\qquad J(u^\varepsilon) - J(u^0) 
			= \mathbb{E} \langle h_x(x^0_0), p^\varepsilon_0 \rangle 
			+ \frac{1}{\Gamma(\alpha)}\, \mathbb{E} \int_0^b (b-s)^{\alpha -1} \langle l_x[s], p^\varepsilon_s \rangle ds \\
			&+  \frac{1}{\Gamma(\alpha)}\, \mathbb{E} \int_0^b (b-s)^{\alpha -1}\langle l_z[s], q^\varepsilon_s \rangle ds 
			+ \frac{1}{\Gamma(\alpha)}\, \mathbb{E} \int_0^b (b-s)^{\alpha -1} \Delta l_u[s] ds + o(\varepsilon^{\alpha}).
		\end{align*}
		
		On the other hand,
		\begin{align*}
			&\qquad \mathbb{E} \langle h_x(x^0_0), p^\varepsilon_0 \rangle 
			+ \frac{1}{\Gamma(\alpha)}\, \mathbb{E} \int_0^b (b-s)^{\alpha -1} \langle l_x[s], p^\varepsilon_s \rangle ds \\
			&+ \frac{1}{\Gamma(\alpha)}\, \mathbb{E} \int_0^b (b-s)^{\alpha -1} \langle l_z[s], q^\varepsilon_s \rangle ds
			=  \frac{1}{\Gamma(\alpha)}\,\mathbb{E} \int_0^b (b-s)^{\alpha -1} \langle \Delta u^\varepsilon \kappa[s], p^\varepsilon_s \rangle ds.
		\end{align*}
		
		Thus, we obtain
		\begin{align*}
			0 \le J(u^\varepsilon) - J(u^0) 
			=\frac{1}{\Gamma(\alpha)}\, \mathbb{E} \int_0^b  (b-s)^{\alpha -1} \langle \Delta u^\varepsilon \kappa[s], p^\varepsilon_s \rangle ds + o(\varepsilon^{\alpha}),
		\end{align*}
		and from here we can easily deduce the variational inequality \eqref{36}.
	\end{proof}
	
	\section{Backward Linear Quadratic Problem}
	
	In this section, we study the LQ optimal control problem associated with the FBSEE. The goal is to minimize a quadratic cost functional subject to a fractional backward dynamic system.
	
	We begin by considering the following optimal control problem:
	\begin{equation}\label{flq2}
		J(u) = \mathbb{E}\,\langle G x_0,x_0\rangle
		+ \frac{1}{\Gamma(\alpha)}\,\mathbb{E}\int_0^b (b-t)^{\alpha-1} \langle \Pi u_t,u_t\rangle \, dt \to \min,
	\end{equation}
	subject to the fractional backward system
	\begin{equation}\label{flq1}
		\begin{cases}
			{^{C}_t}D^{\alpha}_b x_t = [A\, x_t + B\, u_t + C\, z_t] + z_t\, \dfrac{dB_t}{dt},\\[3pt]
			x(b)=\xi ,
		\end{cases}
	\end{equation}
	where $B:U\to H$, $C:L^2_0\to H$, and $\Pi:[0,b]\to L(U)$ are given operators.  
	We assume that $G=G^*$, and that $\Pi_t=\Pi^*_t\geq \gamma I$ for some $\gamma>0$.  
	
	Let $u^0$ denote the optimal control and $(x^0,z^0)$ be the corresponding optimal state processes.  
	To analyze optimality, we introduce a perturbation of the form
	\[
	u^\varepsilon=u^0+\varepsilon v,
	\]
	where $v$ is an admissible variation. The corresponding perturbed state is denoted by
	\[
	x^\varepsilon=x^0+\varepsilon\delta y+o(\varepsilon^{\alpha}), \qquad
	z^\varepsilon=z^0+\varepsilon\delta z+o(\varepsilon^{\alpha}),
	\]
	with the terminal condition $\delta x(b)=0$.  
	The first-order approximation leads to the following linearized (or variational) equation:
	\begin{align*}
		\begin{cases}
			{^{C}_t}D_b^\alpha \delta x_t=A\delta x_t+B v_t+C\delta z_t+\delta z_t\dfrac{dB_t}{dt},\\[3pt]
			\delta x_b=0.
		\end{cases}
	\end{align*}
	
	\subsection{First Variation of the Cost Functional}
	
	We now compute the first variation of the cost functional $J(u)$.  
	By definition, it is given by
	\[\begin{aligned}
		\delta J(v) &= \frac{d}{d\varepsilon}J(u^{\varepsilon})\Bigg|_{\varepsilon=0} \\
		&= \frac{d}{d\varepsilon}\left[\mathbb{E}\langle G x^\varepsilon_0, x^\varepsilon_0\rangle + \frac{1}{\Gamma(\alpha)}\mathbb{E}\int_0^b (b-t)^{\alpha-1} \langle \Pi_t u^\varepsilon_t, u^\varepsilon_t \rangle dt \right] \Bigg|_{\varepsilon=0} \\
		&= \frac{d}{d\varepsilon} \left[ \mathbb{E}\langle G (x^0_0 + \varepsilon \delta x_0), x^0_0 + \varepsilon \delta x_0\rangle \right] \Bigg|_{\varepsilon=0} \\
		& + \frac{d}{d\varepsilon} \left[ \frac{1}{\Gamma(\alpha)} \mathbb{E}\int_0^b (b-t)^{\alpha-1} \langle \Pi_t(u^0_t + \varepsilon v_t), u^0_t + \varepsilon v_t \rangle dt \right] \Bigg|_{\varepsilon=0} \\
		&= \frac{d}{d\varepsilon}\left[\mathbb{E}\langle G x^0_0, x^0_0 \rangle + \varepsilon \mathbb{E}\langle G x^0_0, \delta x_0 \rangle + \varepsilon \mathbb{E}\langle G \delta x_0, x^0_0 \rangle + \varepsilon^2 \mathbb{E}\langle G \delta x_0,\delta x_0 \rangle + o(\varepsilon^{\alpha}) \right] \Bigg|_{\varepsilon=0} \\
		& + \frac{d}{d\varepsilon} \left[ \frac{1}{\Gamma(\alpha)} \mathbb{E}\int_0^b (b-t)^{\alpha-1} \langle \Pi_t u^0_t, u^0_t \rangle dt \right. \\
		& + \left. \frac{ \varepsilon}{\Gamma(\alpha)} \mathbb{E}\int_0^b (b-t)^{\alpha-1} \left[\langle \Pi_t u^0_t, v_t \rangle + \langle \Pi_t v_t, u^0_t \rangle \right] dt \right. \\
		& + \left.  \frac{\varepsilon^2}{\Gamma(\alpha)} \mathbb{E}\int_0^b (b-t)^{\alpha-1}  \langle \Pi_t v_t, v_t \rangle \right] dt\Bigg|_{\varepsilon=0} 
	\end{aligned}\]
	\[\begin{aligned}
		&= \frac{d}{d\varepsilon}\left[\mathbb{E}\langle G x^0_0, x^0_0 \rangle + 2\varepsilon \mathbb{E}\langle G x^0_0, \delta x_0 \rangle + \varepsilon^2 \mathbb{E}\langle G \delta x_0, \delta x_0 \rangle + o(\varepsilon^{\alpha}) \right] \Bigg|_{\varepsilon=0} \\
		& + \frac{d}{d\varepsilon} \left[\frac{1}{\Gamma(\alpha)} \mathbb{E}\int_0^b (b-t)^{\alpha-1} \langle \Pi_t u^0_t, u^0_t \rangle dt \right. \\
		& + \left. \frac{ \varepsilon}{\Gamma(\alpha)} \mathbb{E}\int_0^b (b-t)^{\alpha-1} \left[\langle \Pi_t u^0_t, v_t \rangle + \langle \Pi_t v_t, u^0_t \rangle \right] dt\right.\\
		& + \left.  \frac{\varepsilon^2}{\Gamma(\alpha)} \mathbb{E}\int_0^b (b-t)^{\alpha-1}  \langle \Pi_t v_t, v_t \rangle \right] dt\Bigg|_{\varepsilon=0} \\
		&= 2 \mathbb{E}\langle G x^0_0, \delta x_0\rangle + \frac{2}{\Gamma(\alpha)} \mathbb{E}\int_0^b (b-t)^{\alpha-1} \langle \Pi_t u^0_t, v_t \rangle dt .
	\end{aligned}\]
	Hence, we arrive at the compact form
	\begin{align}\label{18}
		\delta J(v) = 2 \mathbb{E}\langle G x^0_0, \delta x_0\rangle 
		+ \frac{2}{\Gamma(\alpha)} \mathbb{E}\int_0^b (b-t)^{\alpha-1} 
		\langle \Pi_t u^0_t, v_t \rangle dt.
	\end{align}
	
	\subsection{Adjoint Equation}
	
	To express $\delta J(v)$ in terms of $v$ only, we now introduce the adjoint process $\psi_t$ through a fractional integration-by-parts argument \cite{Love}.  
	For sufficiently regular $\psi$ and $\delta y$, the following identity holds:
	\begin{equation}\label{IBP-start}
		\begin{aligned}
			\mathbb{E}\int_0^b \langle \psi_t,{^{C}_t}D_b^\alpha \delta x_t\rangle\,dt
			&=-\mathbb{E}\int_0^b \Big\langle \psi_t,{^{RL}_t}I_b^\alpha \delta x'_t\Big\rangle\,dt\\
			&=-\frac{1}{\Gamma(1-\alpha)}\mathbb{E}\int_0^b\int_t^b 
			\langle\psi_t,(s-t)^{-\alpha}\delta x'_s\rangle\,ds\,dt\\
			&=-\frac{1}{\Gamma(1-\alpha)}\mathbb{E}\int_0^b
			\left\langle \int_0^s (s-t)^{-\alpha}\psi_t\,dt,\ \delta x'_s\right\rangle ds\\
			&=-\mathbb{E}\int_0^b \langle {^{RL}_0}I_s^{1-\alpha}\psi_s,\delta x'_s\rangle\,ds\\
			& = -\mathbb{E}\big\langle {^{RL}_0}I_s^{1-\alpha}\psi_s,\delta x_s\big\rangle\Big|_{s=0}^{s=b}
			+\mathbb{E}\int_0^b \big\langle {^{RL}_0}D_s^{\alpha}\psi_s,\delta x_s\big\rangle\,ds.
		\end{aligned}
	\end{equation}
	
	Evaluating the boundary terms and using $\delta x(b)=0$, we retain only the contribution at $s=0$. Then we obtain, the following result
	\begin{equation}\label{IBP-final}
		\begin{aligned}
			\mathbb{E}\int_0^b \langle \psi_t,{^{C}_t}D_b^\alpha \delta x_t\rangle\,dt=
			\mathbb{E}\big\langle {^{RL}_0}I_s^{1-\alpha}\psi_s,\delta x_s\big\rangle\Big|_{s=0}
			+\mathbb{E}\int_0^b \big\langle {^{RL}_0}D_s^{\alpha}\psi_s,\delta x_s\big\rangle\,ds.
		\end{aligned}
	\end{equation}
	
	Next, we substitute the linearized dynamics into the left-hand side of \eqref{IBP-final}:
	\[
	\begin{aligned}
		\mathbb{E}\int_0^b \langle \psi_t,{^{C}_t}D_b^\alpha \delta x_t\rangle\,dt
		&=\mathbb{E}\int_0^b \langle \psi_t,A\delta x_t+B v_t+C\delta z_t\rangle\,dt
		+\mathbb{E}\int_0^b \langle \psi_t,\delta z_t\rangle\,dB_t.
	\end{aligned}
	\]
	Consequently,
	\[
	\mathbb{E}\int_0^b \langle \psi_t,{^{C}_t}D_b^\alpha \delta x_t\rangle\,dt
	=\mathbb{E}\int_0^b \langle A^*\psi_t+C^* \psi_t\, \frac{dB_t}{dt},\delta x_t\rangle\,dt
	+\mathbb{E}\int_0^b \langle B^*\psi_t,v_t\rangle\,dt.
	\]
	
	Comparing this result with \eqref{IBP-final} yields the relation
	\begin{equation}\label{equate-full}
		\begin{aligned}
			&\mathbb{E}\big\langle {^{RL}_0}I_t^{1-\alpha}\psi_t,\delta x_t\big\rangle\Big|_{t=0}
			+\mathbb{E}\int_0^b \big\langle {^{RL}_0}D_t^{\alpha}\psi_t,\delta x_t\big\rangle\,ds\\
			&=\mathbb{E}\int_0^b \langle A^*\psi_t+C^* \psi_t\, \frac{dB_t}{dt},\delta x_t\rangle\,dt
			+\mathbb{E}\int_0^b \langle B^*\psi_t,v_t\rangle\,dt.
		\end{aligned}
	\end{equation}
	
	\medskip
	\noindent
	Substituting this relation into \eqref{18} and collecting like terms, we obtain
	\[
	\begin{aligned}
		\delta J(v)
		&=\mathbb{E}\langle 2 G x^0_0-{^{RL}_0}I^{1-\alpha}_t\psi_t\big\vert_{t=0},\ \delta x_0\rangle\\
		& +\mathbb{E}\int_0^b \Big\langle -{^{RL}_0}D_t^{\alpha}\psi_t
		+A^*\psi_t+C^* \psi_t\, \frac{dB_t}{dt},\ \delta x_t\Big\rangle\,dt\\
		& +\mathbb{E}\int_0^b \Big\langle B^*\psi_t
		+\frac{2}{\Gamma(\alpha)}(b-t)^{\alpha-1}\Pi_tu^0_t,\ v_t\Big\rangle\,dt.
	\end{aligned}
	\]
	
	To ensure $\delta J(v)=0$ for all admissible $v$ (and arbitrary $\delta y$), we impose the adjoint conditions so that the coefficients of $\delta x_t$ and $\delta x_0$ vanish.  
	Hence, the adjoint variable $\psi_t$ satisfies
	\begin{align}\label{flq3}
		\begin{cases}
			{^{RL}_0}D_t^{\alpha}\psi_t
			=A^*\psi_t+C^* \psi_t\, \frac{dB_t}{dt},\\
			{^{RL}_0}I^{1-\alpha}_t\psi_t\Big\vert_{t=0}=2\,G\,x^0_0.
		\end{cases}
	\end{align}
	
	With these choices, the first variation simplifies to
	\[
	\delta J(v)=\mathbb{E}\int_0^b 
	\Big\langle B^*\psi_t+\frac{2}{\Gamma(\alpha)}(b-t)^{\alpha-1}\Pi_tu^0_t,\ v_t\Big\rangle\,dt.
	\]
	The stationarity condition $\delta J(v)=0$ for all $v$ implies
	\[
	B^*_t\psi_t+\frac{2}{\Gamma(\alpha)}(b-t)^{\alpha-1}\Pi_tu^0_t=0.
	\]
	Therefore, using the invertibility of $\Pi_t$, the optimal control is explicitly given by
	\[
	u^0_t=-\frac{\Gamma(\alpha)}{2}(b-t)^{1-\alpha}\Pi_t^{-1}B^*_t\psi_t.
	\]
	
	\begin{theorem}\label{thm:flq}
		There exists a unique optimal control $u^0$ for the problem \eqref{flq2}–\eqref{flq1}.  
		Moreover, the optimal control admits the following explicit representation:
		\begin{equation}\label{flq4}
			u^0_t=-\frac{\Gamma(\alpha)}{2}(b-t)^{1-\alpha}\Pi_t^{-1}B^*_t\psi_t.
		\end{equation}
	\end{theorem}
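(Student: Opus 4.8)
The plan is to treat the statement as a standard strictly-convex minimization problem on the Hilbert space $L^2_{\mathcal{F}}([0,b];U)$, and then to read off the explicit representation from the first-order characterization already derived above. Concretely, I would proceed in three steps: (i) verify that $J$ is a continuous, strictly convex, coercive quadratic functional of $u$, which gives existence and uniqueness of the minimizer $u^0$ together with the stationarity condition $\delta J(v)=0$ for all admissible $v$; (ii) rewrite this stationarity condition by means of the adjoint process, using the fractional integration-by-parts identity \eqref{IBP-final}--\eqref{equate-full}; (iii) solve the resulting pointwise relation for $u^0$.

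\emph{Step (i).} Since \eqref{flq1} is linear in $(x,z,u)$, the solution map $u\mapsto(x^u,z^u)$ is affine and continuous from $L^2_{\mathcal{F}}([0,b];U)$ into $L^2_{\mathcal{F}}([0,b];H)\times L^2_{\mathcal{F}}([0,b];L^2_0)$; this follows from well-posedness of the FBSEE via the fractional resolvent representation with $S_\alpha,P_\alpha$ and the bounds of Lemma \ref{lem2}, exactly as for \eqref{e1}. In particular the perturbed state satisfies $x^\varepsilon=x^0+\varepsilon\,\delta x$ \emph{exactly}, so $J$ is genuinely quadratic: $J(u)=a(u,u)+2\langle L,u\rangle+c$ with $a$ a bounded symmetric bilinear form, and \eqref{18} is its exact first variation. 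Because $\alpha\in(\tfrac12,1)$ gives $(b-t)^{\alpha-1}\ge b^{\alpha-1}>0$ on $[0,b)$ and $\Pi_t\ge\gamma I$, the control-cost part of $a$ dominates $\tfrac{\gamma b^{\alpha-1}}{\Gamma(\alpha)}\|u\|^2_{L^2_{\mathcal{F}}([0,b];U)}$; combined with $G\ge 0$ (as in the classical LQ setting — or, more generally, under the smallness condition $\|G\|\,\kappa_0<\tfrac{\gamma b^{\alpha-1}}{\Gamma(\alpha)}$, where $\kappa_0:=\sup\{\mathbb E\|\delta x_0\|^2:\ \|v\|_{L^2_{\mathcal{F}}([0,b];U)}=1\}$), the form $a$ is coercive. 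A coercive bounded symmetric bilinear form on a Hilbert space admits, by Lax--Milgram, a unique $u^0$ with $a(u^0,v)+\langle L,v\rangle=0$ for all $v$, i.e. $\delta J(v)=0$; and $J(u^0+v)-J(u^0)=a(v,v)>0$ for $v\ne 0$, so $u^0$ is the unique global minimizer, and conversely this stationarity is also sufficient.

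\emph{Steps (ii)--(iii).} Fix this $u^0$ and, for $v\in\mathcal U_{ad}$, let $(\delta x,\delta z)$ solve the linearized equation driven by $v$ with $\delta x_b=0$. Substituting \eqref{equate-full} into \eqref{18} and choosing $\psi$ to solve the adjoint system \eqref{flq3} annihilates both the $\int_0^b\langle\,\cdot\,,\delta x_t\rangle\,dt$ term and the $\langle\,\cdot\,,\delta x_0\rangle$ boundary term: the terminal condition $\delta x_b=0$ kills the boundary contribution at $s=b$ in \eqref{IBP-start}, while the nonlocal initial condition ${^{RL}_0}I^{1-\alpha}_t\psi_t\big\vert_{t=0}=2Gx^0_0$ matches the contribution at $s=0$ against the $2\mathbb E\langle Gx^0_0,\delta x_0\rangle$ term of \eqref{18}. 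What remains is $\delta J(v)=\mathbb E\int_0^b\big\langle B^*\psi_t+\tfrac{2}{\Gamma(\alpha)}(b-t)^{\alpha-1}\Pi_t u^0_t,\ v_t\big\rangle\,dt$. Since $v$ ranges over all of $L^2_{\mathcal{F}}([0,b];U)$ and $\delta J(v)=0$, the integrand vanishes $dt\otimes d\mathbb P$-a.e.; as $\Pi_t=\Pi_t^*\ge\gamma I$ is boundedly invertible with $\|\Pi_t^{-1}\|\le 1/\gamma$, solving yields $u^0_t=-\tfrac{\Gamma(\alpha)}{2}(b-t)^{1-\alpha}\Pi_t^{-1}B^*_t\psi_t$, which is \eqref{flq4}; and since $(b-t)^{1-\alpha}\le b^{1-\alpha}$ and $\psi\in L^2_{\mathcal{F}}([0,b];H)$, this $u^0$ is indeed admissible.

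The main obstacle is the rigorous justification of the fractional integration-by-parts chain \eqref{IBP-start}: one must fix the function class in which the Riemann--Liouville operators ${^{RL}_0}D_t^\alpha$ and ${^{RL}_0}I_t^{1-\alpha}$ act on the (stochastic, merely square-integrable) processes $\psi$ and $\delta x$, justify the Fubini interchange $\int_0^b\!\int_t^b(s-t)^{-\alpha}(\cdot)\,ds\,dt=\int_0^b\!\int_0^s(s-t)^{-\alpha}(\cdot)\,dt\,ds$, and verify that the boundary terms are well-defined — in particular that ${^{RL}_0}I_s^{1-\alpha}\psi_s$ admits a finite limit as $s\downarrow 0$, which is exactly the meaning of the nonlocal initial condition in \eqref{flq3}. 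A secondary point, needed so that the $\psi$ appearing in \eqref{flq4} genuinely exists, is the well-posedness (existence, uniqueness, adaptedness, square-integrability) of the decoupled adjoint equation \eqref{flq3} — a fractional forward stochastic evolution equation in Riemann--Liouville form with nonlocal initial data — which can be established by a fractional-resolvent/fixed-point argument paralleling the one used for \eqref{e1}.
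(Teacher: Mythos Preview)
Your proof is correct, but it takes a different route from the paper's. The paper's own proof is very short: it simply observes that the quadratic, positive-definite structure of \eqref{flq2} guarantees existence of an optimal control, and then invokes the general stochastic maximum principle (Theorem~\ref{w1}) to obtain the pointwise Hamiltonian maximization inequality
\[
\langle Bu^0_t,\psi_t\rangle-\tfrac{(b-t)^{\alpha-1}}{\Gamma(\alpha)}\langle\Pi_t u^0_t,u^0_t\rangle
\ \ge\ \langle Bv,\psi_t\rangle-\tfrac{(b-t)^{\alpha-1}}{\Gamma(\alpha)}\langle\Pi_t v,v\rangle,
\]
from which \eqref{flq4} follows by elementary quadratic maximization in $v$. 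Your argument, by contrast, bypasses Theorem~\ref{w1} entirely: you establish existence and uniqueness via Lax--Milgram on $L^2_{\mathcal{F}}([0,b];U)$, and then feed the resulting stationarity condition $\delta J(v)=0$ through the fractional integration-by-parts machinery that the paper develops \emph{before} stating the theorem (equations \eqref{18}--\eqref{equate-full} and the adjoint system \eqref{flq3}). What you gain is a self-contained treatment of the LQ problem that does not rely on the spike-variation SMP of Section~3, and you are more careful than the paper about coercivity (the paper only assumes $G=G^*$, not $G\ge 0$, and glosses over this). What the paper's route buys is that it displays the LQ problem as a direct specialization of the general SMP, which is the conceptual point of Section~4. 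The technical caveats you flag---rigor of the fractional integration by parts on stochastic processes, and well-posedness of the Riemann--Liouville adjoint equation \eqref{flq3}---apply equally to the paper's presentation and are not specific defects of your argument.
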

	
	\begin{proof}
		The cost functional \eqref{flq2} is clearly quadratic and positive definite under the assumptions on $G$ and $\Pi_t$.  
		Thus, an optimal control exists.  
		Furthermore, since the associated backward system satisfies \eqref{flq3}, by applying Theorem~\ref{w1}, we obtain
		\[
		\langle  B u^0_t, \psi_t \rangle - \frac{(b-t)^{\alpha-1}}{\Gamma(\alpha)}\langle \Pi_t u^0_t, u^0_t \rangle 
		\geq \langle B v, \psi_t \rangle - \frac{(b-t)^{\alpha-1}}{\Gamma(\alpha)}\langle \Pi_t v, v \rangle,
		\quad \forall v \in U.
		\]
		This inequality directly leads to the optimality condition \eqref{flq4}.  
		Hence, the control given by \eqref{flq4} uniquely satisfies the SMP and is therefore the optimal control.  
	\end{proof}
	
	\section*{Conclusion}
	
	This paper has established a comprehensive theoretical framework for optimal control of FBSEES in Hilbert spaces. The main contributions are summarized as follows:
	
	$\bullet$  We have derived a necessary optimality condition in the form of a stochastic maximum principle for a general class of non-linear fractional stochastic control problems. This was achieved by employing the spike variation method, rigorously estimating the moments of the resulting variational system, and introducing a novel adjoint process tailored to the fractional backward dynamics.
	
	$\bullet$ We obtained an explicit, closed-form representation of the optimal control for the backward LQ optimal control problem. The solution is characterized in terms of the adjoint process, which itself is governed by a forward stochastic differential equation involving Riemann-Liouville fractional derivatives.
	
	$\bullet$  Our work provides a unified framework that seamlessly integrates tools from fractional calculus, stochastic analysis, and infinite-dimensional control theory. The results extend classical control theory to systems with memory and long-range dependencies, modeled by fractional derivatives.
	
	This research opens several avenues for future work, including the study of fully coupled forward-backward fractional systems, problems with state constraints, and the numerical analysis of the derived optimality conditions for practical applications in fields like engineering and mathematical finance.
	
	\section*{Data Availability}
	No data were used to support this study.
	
	\section*{Conflicts of Interest}
	The author declares that there are no conflicts of interest.
	
	\section*{Funding} 
	No funding was received.

\end{document}